\newcommand{\qbinom}[2]{\genfrac{[}{]}{0pt}{}{#1}{#2}_{q}}
\newtheorem{theorem}{Theorem}
\newtheorem*{theorem*}{Theorem}
\theoremstyle{plain}
\newtheorem{corollary}{Corollary}
\newtheorem*{lemma*}{Lemma}
\newtheorem*{proposition*}{Proposition}
\theoremstyle{remark}
\newtheorem*{remark*}{Remark}
\theoremstyle{definition}
\newtheorem*{acknowledgements}{Acknowledgements}
\begin{document}


\title{Toeplitz minors and specializations of skew Schur polynomials}
\author{David Garc\'{\i}a-Garc\'{\i}a}
\author{Miguel Tierz}
\address[$\dagger$]{Departamento de Matem\'{a}tica, Grupo de F\'{\i}sica Matem\'{a}%
tica, Faculdade de Ci\^{e}ncias, Universidade de Lisboa, Campo Grande, Edif%
\'{\i}cio C6, 1749-016 Lisboa, Portugal.}
\email{dgarciagarcia@fc.ul.pt}
\address[$\ddag$]{Departamento de Matem\'{a}tica, ISCTE - Instituto Universit\'{a}rio de Lisboa, Avenida
das Forças Armadas, 1649-026 Lisboa, Portugal.}
\email{mtpaz@iscte-iul.pt}
\address[$\ddag$]{Departamento de Matem\'{a}tica, Grupo de F\'{\i}sica Matem\'{a}%
tica, Faculdade de Ci\^{e}ncias, Universidade de Lisboa, Campo Grande, Edif%
\'{\i}cio C6, 1749-016 Lisboa, Portugal.}
\email{tierz@fc.ul.pt}

\begin{abstract}
We express minors of Toeplitz matrices of finite and large dimension in terms of symmetric functions. Comparing the resulting expressions with the inverses of some Toeplitz matrices, we obtain explicit formulas for a Selberg-Morris integral and for specializations of certain skew Schur polynomials.

\medskip \noindent
\textbf{Keywords:} Toeplitz minor, skew Schur polynomial, Fisher-Hartwig singularity, Toeplitz inverse.
\end{abstract}

\maketitle

\section{Introduction}

Let $f(e^{i\theta})=\sum_{k\in\mathbb{Z}}d_{k}e^{ik\theta}$ be an integrable function on the unit circle. The Toeplitz matrix generated by $f$ is the matrix
\begin{equation*}
	T(f)=(d_{j-k})_{j,k\geq1}.
\end{equation*}
That is, $T(f)$ is an infinite matrix, constant along its diagonals, which entries are the Fourier coefficients of the function $f$. We denote by $T_{N}(f)$ its principal submatrix of order $N$, and
\begin{equation*}
    D_{N}(f)=\det{T_{N}(f)}.
\end{equation*}
This determinant has the following integral representation
\begin{equation*}
	D_{N}(f)=\int_{U(N)}f(M)dM=\frac{1}{N!}\frac{1}{(2\pi)^N}\int_{0}^{2\pi}...\int_{0}^{2\pi}\prod_{j=1}^{N}f(e^{i\theta_{j}})\prod_{1\leq j<k\leq N}|e^{i\theta_{j}}-e^{i\theta_{k}}|^{2}d\theta_{1}... d\theta_{N},
\end{equation*}
where $dM$ denotes the normalized Haar measure on the unitary group $U(N)$. This is known as Heine identity. A main result in the theory of Toeplitz matrices is the strong Szeg\H o limit theorem, that describes the behaviour of these determinants as $N$ grows to infinity, as long as the function $f$ is sufficiently regular (see section \ref{s.toep} for a precise statement of the theorem).

~

A Toeplitz minor is a minor of a Toeplitz matrix, obtained by striking a finite number of rows and columns from $T(f)$. This can be realized, up to a sign, as the determinant of a matrix of the form
\begin{equation} \label{tmin}
	T_{N}^{\lambda,\mu}(f)=(d_{j-\lambda_{j}-k+\mu_{k}})_{j,k=1}^{N},
\end{equation}
where $\lambda$ and $\mu$ are integer partitions that encode the particular striking considered (see section \ref{s.symm} for more details). We denote
\begin{equation*}
    D_{N}^{\lambda,\mu}(f)=\det{T_{N}^{\lambda,\mu}(f)}.
\end{equation*}
Toeplitz minors also have an integral representation \cite{BumpDiaconis,AdlervanMo}
\begin{align}
	D_{N}^{\lambda,\mu}(f) &= \int_{U(N)}  \overline{s_{\lambda}(M)}s_{\mu}(M)f(M) dM = \label{heinemin} \\ \frac{1}{N!}\frac{1}{(2\pi)^N}&\int_{0}^{2\pi}...\int_{0}^{2\pi}s_{\lambda}(e^{-i\theta_{1}},...,e^{-i\theta_{N}})s_{\mu}(e^{i\theta_{1}},...,e^{i\theta_{N}})\prod_{j=1}^{N}f(e^{i\theta_{j}})\prod_{1\leq j<k\leq N}|e^{i\theta_{j}}-e^{i\theta_{k}}|^{2}d\theta_{1}... d\theta_{N}, \nonumber
\end{align}
where $s_{\lambda},s_{\mu}$ are Schur polynomials\footnote{We abuse notation here; we assume it is clear when the expression $f(M)$ should be read as $\prod_{j}f(e^{i\theta_{j}})$ (i.e. when $f$ is a function on the unit circle) and when it should be read as $f(e^{i\theta_{1}},\dots,e^{i\theta_{N}})$ (i.e. when $f$ is a symmetric function in several variables). See \cite{MacDonald} and section \ref{s.symm} for definitions of Schur polynomials.}. Bump and Diaconis \cite{BumpDiaconis} described the asymptotic behaviour of Toeplitz minors generated by functions that are sufficiently regular, as in Szeg\H o's theorem. They proved that in the large $N$ limit, these minors can be expressed as the product of the corresponding Toeplitz determinant times a ``combinatorial" factor, that depends only on the function $f$ and the striking considered and is independent of $N$ (see section \ref{s.toep} for a precise statement). Tracy and Widom obtained a similar result in \cite{TWmin}, and they were compared in \cite{Deha2}. Further generalizations regarding the asymptotics of integrals of the type \eqref{heinemin} were given in  \cite{Deha1,Lyons}.  Other works that study Toeplitz minors in relationship with Schur and skew Schur polynomials are \cite{Lascoux,Alexandersson,Okounkov,Maximenko}

~

The asymptotics of Toeplitz determinants generated by symbols that do not verify the regularity conditions in Szeg\H o's theorem have been long studied. In the seminal work \cite{FisherHartwig}, Fisher and Hartwig conjectured the asymptotic behaviour of Toeplitz determinants generated by a class of (integrable) functions that violate these conditions. The functions in this class are products of a function which is regular, in the sense of Szeg\H o's theorem, and a finite number of so-called pure Fisher-Hartwig singularities. Their conjecture was later refined in \cite{Basor} and \cite{BasorTracy}, and only recently a complete description of the asymptotics of these determinants was achieved by Deift, Its and Krasovsky \cite{DIKFH}. See \cite{DIK} for a detailed historical account of the subject.

~

In this paper we exploit the formalism of symmetric functions to study Toeplitz minors. After section \ref{s.prel}, where some known results are reviewed, we obtain an equivalent expression for the combinatorial factor of Bump and Diaconis in terms of skew Schur polynomials. This is done in section \ref{s.res}, where we also characterize $(i)$ a class of Toeplitz minors for which an exact asymptotic expression can be obtained, and $(ii)$ a class of Toeplitz minors that can be realized as the specialization of a single skew Schur polynomial. In section \ref{s.inv} we compute the inverses of some Toeplitz matrices, using the Duduchava-Roch formula and the kernel associated to two sets of biorthogonal polynomials on the unit circle. Comparing these matrices with their expressions in terms of Toeplitz minors we obtain explicit evaluations of a family of specialized skew Schur polynomials and of a Selberg-Morris type integral.


\section{Preliminaries} \label{s.prel}

\subsection{Symmetric functions} \label{s.symm}

Let us recall some basic results involving symmetric functions that can be found in \cite{MacDonald,Stanley}, for example. We denote $z=e^{i\theta}$ in the following, and treat $z$ as a formal variable. A partition $\lambda=(\lambda_{1},\dots,\lambda_{l})$ is a finite and non-increasing sequence of positive integers. The number of nonzero entries is called the length of the partition and is denoted by $l(\lambda)$, and the sum $|\lambda|=\lambda_{1}+\dots+\lambda_{l(\lambda)}$ is called the weight of the partition. The entry $\lambda_{j}$ is understood to be zero whenever the index $j$ is greater than the length of the partition. The notation $(a^{b})$ stands for the partition with exactly $b$ nonzero entries, all equal to $a$. A partition can be represented as a Young diagram, by placing $\lambda_{j}$ left-justified boxes in the $j$-th row of the diagram. The conjugate partition $\lambda'$ is then obtained as the partition which diagram has as rows the columns of the diagram of $\lambda$ (see figure \ref{f.part} for an example).
\ytableausetup{boxsize=0.35cm}
\begin{figure}
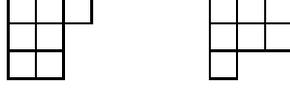
 \label{f.part}
    $\ydiagram{3,2,2} \qquad  \qquad \ydiagram{3,3,1}$
\caption{The partition $(3,2,2)$ and its conjugate $(3,3,1)$.}
\end{figure}
The following procedure describes how to obtain the Toeplitz minor $D_{N}^{\lambda,\mu}(f)$, given by \eqref{tmin}, from the underlying Toeplitz matrix (we assume in the following that the length of the partitions $\lambda$ and $\mu$ is less than or equal to $N$, the size of the matrix under consideration):
\begin{itemize}
    \item Strike the first $|\lambda_{1}-\mu_{1}|$ columns or rows of $T_{N+\max{\{\lambda_{1},\mu_{1}\}}}(f)$, depending on whether $\lambda_{1}-\mu_{1}$ is greater or smaller than zero, respectively.
	\item Keep the first row of the matrix, and strike the next $\lambda_{1}-\lambda_{2}$ rows. Keep the next row, and strike the next $\lambda_{2}-\lambda_{3}$ rows. Continue until striking $\lambda_{l(\lambda)}-\lambda_{l(\lambda)+1}=\lambda_{l(\lambda)}$ rows.
	\item Repeat the previous step on the columns of the matrix with $\mu$ in place of $\lambda$. The resulting matrix is precisely $T_{N}^{\lambda,\mu}(f)$, as defined in \eqref{tmin}.
\end{itemize}

If $x=(x_{1},x_{2},...)$ is a set of variables, the power-sum symmetric polynomials $p_{k}$ are defined as $p_{k}(x)=x_{1}^{k}+x_{2}^{k}+\dots$ for every $k\geq1$, and $p_{0}(x)=1$. They are related to the elementary symmetric polynomials $e_{k}(x)$ and the complete homogeneous polynomials $h_{k}(x)$ by the formulas
\begin{align} \label{generat} \begin{split}
    \exp{\left(\sum_{k=1}^{\infty}\frac{p_{k}(x)}{k}z^{k}\right)} = \sum_{k=0}^{\infty} h_{k}(x)z^{k} = \prod_{j=1}^{\infty}\frac{1}{1-x_{j}z} = H(x;z), \\
    \exp{\left(\sum_{k=1}^{\infty}(-1)^{k+1}\frac{p_{k}(x)}{k}z^{k}\right)} = \sum_{k=0}^{\infty} e_{k}(x)z^{k} = \prod_{j=1}^{\infty}(1+x_{j}z) = E(x;z). \\
\end{split} \end{align}
We also set $p_{k}(x)=h_{k}(x)=e_{k}(x)=0$ for negative $k$. The families $(h_{k}(x))$ and $(e_{k}(x))$ where $k\geq0$ consist of algebraically independent functions. Hence, we will see $H$ and $E$ as arbitrary functions on the unit circle depending on the parameters $x$, and we will use indistinctly their infinite product expression. We note also that these two functions satisfy $H(x;z)E(x;-z)=1$. The classical Jacobi-Trudi identities express Schur polynomials as Toeplitz minors generated by $H$ and $E$
\begin{align*}
	s_{\mu}(x) &= \det\left(h_{j-k+\mu_{k}}(x)\right)_{j,k=1}^{N}=D_{N}^{\varnothing,\mu}\left(H(x;z)\right), \\
	s_{\mu'}(x) &= \det\left(e_{j-k+\mu_{k}}(x)\right)_{j,k=1}^{N}=D_{N}^{\varnothing,\mu}\left(E(x;z)\right),
\end{align*}
where $l(\mu),l(\mu')\leq N$, respectively, and $\varnothing$ denotes the empty partition. More generally, skew Schur polynomials can be expressed as the minors
\begin{equation} \label{skewschur}
    s_{\mu/\lambda}(x) = D_{N}^{\lambda,\mu}(H(x;z)),\qquad s_{(\mu/\lambda)'}(x) = D_{N}^{\lambda,\mu}(E(x;z)),
\end{equation}
where $l(\mu),l(\mu')\leq N$ respectively. A skew Schur polynomial vanishes if $\lambda\nsubseteq\mu$, which can be seen as a consequence of its Toeplitz minor representation and the fact that the Toeplitz matrices above are triangular. A central result in the theory of symmetric functions is the Cauchy identity, and its dual form
\begin{equation*}
    \sum_{\nu}s_{\nu}(x)s_{\nu}(y)=\prod_{j=1}^{\infty}\prod_{k=1}^{\infty}\frac{1}{1-x_{j}y_{k}}, \qquad \sum_{\nu}s_{\nu}(x)s_{\nu'}(y)=\prod_{j=1}^{\infty}\prod_{k=1}^{\infty}(1+x_{j}y_{k}),
\end{equation*}
where $y=(y_{1},y_{2},\dots)$ is another set of variables and the sums run over all partitions $\nu$.

~

Gessel \cite{Gessel} obtained the following expression for the Toeplitz determinant generated by the function $f(z)=H(y;z^{-1})H(x;z)$
\begin{equation} \label{gessel}
    D_{N}\left( \prod_{k=1}^{\infty}\frac{1}{1-y_{k}z^{-1}}\prod_{j=1}^{\infty}\frac{1}{1-x_{j}z} \right) = \sum_{l(\nu)\leq N}s_{\nu}(y)s_{\nu}(x),
\end{equation}
where the sum runs over all partitions $\nu$ of length $l(\nu)\leq N$. If one of the sets of variables $x$ or $y$ is finite, say $y=(y_{1},\dots,y_{d})$, comparing the right hand side above with the sum in Cauchy identity and recalling that the Schur polynomial $s_{\nu}(y_{1},...,y_{d})$ vanishes if $l(\nu)>d$ one obtains a well known identity of Baxter \cite{Baxter}
\begin{equation} \label{baxterid}
    D_{N}\left(\prod_{k=1}^{d}\frac{1}{1-y_{k}z^{-1}}\prod_{j=1}^{\infty}\frac{1}{1-x_{j}z}\right) = \prod_{k=1}^{d}\prod_{j=1}^{\infty}\frac{1}{1-x_{j}y_{k}},
\end{equation}
valid when $N\geq d$. Note that the right hand side above is independent of $N$. An analogous identity follows if the factor $H(x;z)$ is replaced by $E(x;z)$, using the dual Cauchy identity instead. However, no such identity is available for Toeplitz determinants generated by symbols of the type $E(y;z^{-1})E(x;z)$; this will be relevant later. 


\subsection{Toeplitz determinants and minors generated by smooth symbols.} \label{s.toep}
We record now precise statements of the strong Szeg\H o limit theorem and of its generalization to Toeplitz minors.
\begin{theorem*}[Szeg\H o]
Let $f(e^{i\theta})=\sum_{k\in\mathbb{Z}}d_{k}e^{ik\theta}$ be a function on the unit circle, and suppose it can be expressed as $f(e^{i\theta})=\exp(\sum_{k\in\mathbb{Z}}c_{k}e^{ik\theta})$, where the coefficients $c_{k}$ verify
\begin{equation}
    \sum_{k\in\mathbb{Z}}|c_{k}|<\infty,\qquad \sum_{k\in\mathbb{Z}}|k||c_{k}|^{2}<\infty. \label{szcond}
\end{equation}
Then, as $N\rightarrow\infty$,
\begin{equation*}
	D_{N}(f)\sim \exp\left(Nc_{0}+\sum_{k=1}^{\infty}kc_{k}c_{-k}\right).
\end{equation*}
\end{theorem*}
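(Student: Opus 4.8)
The plan is to reduce Szeg\H o's theorem to Gessel's identity \eqref{gessel} by realizing the two one-sided factors of $f$ as specializations of the generating function $H$. First I would use the hypothesis $\sum_{k}|c_{k}|<\infty$ to write the Szeg\H o factorization
\[
f(e^{i\theta})=e^{c_{0}}\,f_{-}(z)\,f_{+}(z),\qquad f_{+}(z)=\exp\Big(\sum_{k\geq1}c_{k}z^{k}\Big),\quad f_{-}(z)=\exp\Big(\sum_{k\geq1}c_{-k}z^{-k}\Big),
\]
where each factor lies in the Wiener algebra and hence has an absolutely convergent Fourier series. Since multiplying a symbol by a constant $\alpha$ multiplies every entry of $T_{N}$ by $\alpha$, one has $D_{N}(e^{c_{0}}g)=e^{Nc_{0}}D_{N}(g)$, so the constant $c_{0}$ contributes the factor $e^{Nc_{0}}$ exactly and it remains to analyze $D_{N}(f_{-}f_{+})$.

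Next I would introduce the specialization of the ring of symmetric functions determined, via the algebraic independence of the power sums, by $p_{k}(x)=kc_{k}$ and $p_{k}(y)=kc_{-k}$ for $k\geq1$. Comparing with the first line of \eqref{generat} gives $f_{+}(z)=H(x;z)$ and $f_{-}(z)=H(y;z^{-1})$ under this specialization. Gessel's identity \eqref{gessel} then applies, since it is an identity in the ring of symmetric functions and the left-hand determinant has convergent entries, yielding
\[
D_{N}(f_{-}f_{+})=\sum_{l(\nu)\leq N}s_{\nu}(y)\,s_{\nu}(x).
\]
Letting $N\to\infty$, the truncation by length disappears and the sum becomes the full Cauchy sum, which by the Cauchy identity equals $\exp\big(\sum_{m\geq1}\tfrac{p_{m}(x)p_{m}(y)}{m}\big)$. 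Substituting $p_{m}(x)=mc_{m}$ and $p_{m}(y)=mc_{-m}$ collapses this to $\exp\big(\sum_{m\geq1}mc_{m}c_{-m}\big)$, and combining with $e^{Nc_{0}}$ gives the claimed asymptotics.

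The main obstacle is precisely this passage to the limit: one must justify that $\sum_{\nu}s_{\nu}(y)s_{\nu}(x)$ converges absolutely under this (generally complex) specialization, for then the length-truncated partial sums converge to the full sum and the Cauchy identity holds numerically rather than only formally. Here the second Szeg\H o hypothesis $\sum_{k}|k||c_{k}|^{2}<\infty$ does the essential work: by Cauchy--Schwarz it bounds $\sum_{m\geq1}m|c_{m}c_{-m}|$, so the target exponent is finite, and together with $\sum_{k}|c_{k}|<\infty$ it supplies the decay estimates on the specialized Schur functions needed to control the tail $\sum_{l(\nu)>N}|s_{\nu}(y)s_{\nu}(x)|$. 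Because the specialization need not come from genuine nonnegative variables, the usual monotone/positivity argument for the Cauchy sum is unavailable; I would instead first establish the limit for symbols with only finitely many nonzero $c_{k}$, where convergence is transparent, and then extend to the general case by a continuity argument in the norm $\|c\|=\sum_{k}|c_{k}|+\big(\sum_{k}|k||c_{k}|^{2}\big)^{1/2}$, using uniform-in-$N$ estimates on $D_{N}(f_{-}f_{+})$.
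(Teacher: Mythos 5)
You should first note that the paper does not prove this statement at all: it is recorded in section \ref{s.toep} as classical background (attributed to Szeg\H o), so there is no internal proof to compare against. Your route is, however, exactly the symmetric-function route the paper itself gestures at when it remarks that ``Szeg\H o's theorem is formally equivalent to Cauchy's identity'' (citing \cite{TWwords}), and its algebraic skeleton is sound: the Wiener--Szeg\H o factorization $f=e^{c_0}f_-f_+$, the exact scaling $D_N(e^{c_0}g)=e^{Nc_0}D_N(g)$, the specialization $p_k\mapsto kc_k$, $p_k\mapsto kc_{-k}$ turning $f_\pm$ into $H(x;z)$ and $H(y;z^{-1})$ via \eqref{generat}, Gessel's identity \eqref{gessel} at fixed $N$, and the Cauchy identity in the limit. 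This is the same machinery the paper uses to prove its Theorem \ref{mainth}.

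The genuine gap is the analytic step, and your proposed repair would not work as stated. Two points. First, even at fixed $N$, Gessel's identity under a complex specialization is not justified by ``the left-hand determinant has convergent entries'': the right-hand side is an infinite series, and equality requires the infinite-dimensional Cauchy--Binet expansion of \cite{TWwords}, whose absolute convergence follows from $\ell^2$ (Hilbert--Schmidt) bounds on the rows $\left(h_{t-s}(x)\right)_t$, not from convergence of the entries. Second, and more seriously, your control of the tail is insufficient: Cauchy--Schwarz on the exponent only shows $\sum_{m\geq1}m|c_mc_{-m}|<\infty$, i.e.\ that the limiting value is finite, which says nothing about $\sum_{\nu}|s_\nu(y)s_\nu(x)|$; and the fallback plan (finitely supported $c$ plus a continuity argument with ``uniform-in-$N$ estimates on $D_N$'') begs the question, since such uniform estimates are precisely the hard content of Szeg\H o's theorem. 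The missing concrete idea is a positivity argument: expanding $s_\nu=\sum_{\mu}z_\mu^{-1}\chi^\nu_\mu p_\mu$ and using character orthogonality degree by degree (all sums finite in each degree, nonnegativity justifying the rearrangement over degrees), one gets $\sum_{\nu}|s_\nu(x)|^2=\exp\left(\sum_{k\geq1}k|c_k|^2\right)<\infty$, and likewise for $y$; this is exactly where the hypothesis $\sum_k|k||c_k|^2<\infty$ enters. Cauchy--Schwarz over partitions then gives $\sum_{\nu}|s_\nu(y)s_\nu(x)|<\infty$, so the length-truncated Gessel sums converge to the full Cauchy sum, which the same character computation evaluates as $\exp\left(\sum_{k\geq1}kc_kc_{-k}\right)$. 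With that lemma in place your argument closes; without it, the proof is incomplete at its crucial point.
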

A function $f$ satisfying the hypotheses of this theorem is continuous, nonzero, and has winding number zero \cite{BottSilb}. Functions with Fisher-Hartwig singularities need not verify these properties (see section \ref{s.FH}). Under these same conditions, the following theorem holds.
\begin{theorem*}[Bump, Diaconis \cite{BumpDiaconis}] \label{ssltbd}
Let $f$ verify the hypotheses in the previous theorem,
and suppose $\lambda$ and $\mu$ are partitions of weights $n$ and $m$ respectively. Then, as $N\rightarrow\infty$
\begin{equation} \label{BD}
	D_{N}^{\lambda,\mu}(f) \sim D_{N}(f) \sum_{\phi\vdash n}\sum_{\psi\vdash m}  \chi^{\lambda}_{\phi}\:\chi^{\mu}_{\psi}\:z_{\phi}^{-1}z_{\psi}^{-1} \Delta(f,\phi,\psi),
\end{equation}
where the sum runs over all the partitions $\phi$ of $n$ and $\psi$ of $m$, the terms $z_{\phi},z_{\psi}$ are the orders of the centralizers of the equivalence classes of the symmetric groups $S_{n},S_{m}$ indexed by $\phi$ and $\psi$ respectively, the functions $\chi^{\lambda},\chi^{\mu}$ are the characters associated to the irreducible representations of $S_{n}$ and $S_{m}$ indexed by $\lambda$ and $\mu$ respectively, and
\begin{equation*}
	\Delta(f,\phi,\psi) = \prod_{k=1}^{\infty} \begin{dcases}
    k^{n_{k}}c_{-k}^{n_{k}-m_{k}}m_{k}!L_{m_{k}}^{(n_{k}-m_{k})}(-kc_{k}c_{-k}), & \textrm{if }n_{k}\geq m_{k} \\
    k^{m_{k}}c_{k}^{m_{k}-n_{k}}n_{k}!L_{n_{k}}^{(m_{k}-n_{k})}(-kc_{k}c_{-k}), & \textrm{if }n_{k}\leq m_{k}
  \end{dcases}.
\end{equation*}
Above, the coefficients $n_{k},m_{k}$ correspond to the partitions $\phi=(1^{n_{1}}2^{n_{2}}\dots)$ and $\psi=(1^{m_{1}}2^{m_{2}}\dots)$ in their frequency notation, and $L_{n}^{(a)}$ are the Laguerre polynomials \cite{Szego}.
\end{theorem*}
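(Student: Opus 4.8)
The plan is to pass from the minor to the determinant through the integral representation \eqref{heinemin}, expand the two Schur polynomials into power sums by the Frobenius character formula, and then extract each power-sum insertion from the strong Szeg\H o limit theorem by differentiating a suitably perturbed symbol. First I would invoke the Frobenius formula $s_{\lambda}=\sum_{\phi\vdash n}z_{\phi}^{-1}\chi^{\lambda}_{\phi}\,p_{\phi}$, where $p_{\phi}=\prod_{k\ge1}p_{k}^{n_{k}}$ for $\phi=(1^{n_{1}}2^{n_{2}}\cdots)$, together with its analogue for $\mu\vdash m$. Substituting both into \eqref{heinemin} and using that $\overline{p_{k}(M)}=\operatorname{tr}M^{-k}$ for $M\in U(N)$, the minor becomes
\[
 D_{N}^{\lambda,\mu}(f)=\sum_{\phi\vdash n}\sum_{\psi\vdash m}z_{\phi}^{-1}z_{\psi}^{-1}\chi^{\lambda}_{\phi}\chi^{\mu}_{\psi}\int_{U(N)}\prod_{k\ge1}p_{k}(M)^{n_{k}}\,\overline{p_{k}(M)}^{\,m_{k}}\,f(M)\,dM .
\]
The full combinatorial prefactor of \eqref{BD} — the double sum over $\phi,\psi$ weighted by the $z^{-1}\chi$ factors — is thereby produced for free, and the whole problem collapses to the asymptotics of the single $f$-average of the displayed product of traces, which I must show behaves like $D_{N}(f)\,\Delta(f,\phi,\psi)$.

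To compute that average I would set up a generating function. Since $\phi$ and $\psi$ are partitions, only finitely many $n_{k},m_{k}$ are nonzero; introduce one pair of parameters $(s_{k},t_{k})$ for each such $k$ and consider the perturbed symbol $f_{s,t}(e^{i\theta})=f(e^{i\theta})\exp\!\big(\sum_{k}(s_{k}e^{ik\theta}+t_{k}e^{-ik\theta})\big)$, which differs from $f$ only by a trigonometric-polynomial exponent and so still satisfies Szeg\H o's hypotheses. Then $\int_{U(N)}f_{s,t}(M)\,dM=D_{N}(f_{s,t})$ is again a Toeplitz determinant, and one insertion of $p_{k}(M)$ (resp. $\overline{p_{k}(M)}$) is obtained by one application of $\partial_{s_{k}}$ (resp. $\partial_{t_{k}}$) at $s=t=0$. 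The strong Szeg\H o theorem applied to $f_{s,t}$ gives $D_{N}(f_{s,t})\sim\exp\!\big(Nc_{0}+\sum_{k}k(c_{k}+s_{k})(c_{-k}+t_{k})\big)$, whence $D_{N}(f_{s,t})/D_{N}(f)$ tends to $\prod_{k}\exp\!\big(kc_{-k}s_{k}+kc_{k}t_{k}+ks_{k}t_{k}\big)$. Crucially this limit factorizes over the modes $k$, so the same will hold for the mixed derivative that recovers the average, explaining the product $\prod_{k}$ in $\Delta$.

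The last step is to recognize each per-mode factor. For a fixed $k$ I would compute, via $\exp(\gamma st)=\sum_{r}(\gamma st)^{r}/r!$ and reading off the coefficient of $s^{n_{k}}t^{m_{k}}$,
\[
 \partial_{s}^{n_{k}}\partial_{t}^{m_{k}}\exp\!\big(kc_{-k}s+kc_{k}t+kst\big)\Big|_{s=t=0}=k^{n_{k}}c_{-k}^{\,n_{k}-m_{k}}\,m_{k}!\,L_{m_{k}}^{(n_{k}-m_{k})}(-kc_{k}c_{-k}),
\]
the standard generating identity for the associated Laguerre polynomials, valid when $n_{k}\ge m_{k}$; the opposite case $n_{k}\le m_{k}$ follows by exchanging $(s,c_{-k},n_{k})\leftrightarrow(t,c_{k},m_{k})$, and modes with $n_{k}=m_{k}=0$ contribute the trivial factor $1$. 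Multiplying these factors gives exactly $\Delta(f,\phi,\psi)$, and reassembling with the prefactor from the first step yields \eqref{BD}.

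The main obstacle is analytic rather than combinatorial: it is the interchange of the limit $N\to\infty$ with the differentiation in $(s,t)$. Szeg\H o's theorem supplies the pointwise asymptotics of $D_{N}(f_{s,t})$, but to conclude that the derivatives of $D_{N}(f_{s,t})/D_{N}(f)$ converge to the derivatives of the limiting generating function I need the convergence to be uniform on a complex polydisc in the finitely many parameters $(s_{k},t_{k})$; then Cauchy's estimates (equivalently a normal-families/Vitali argument, using that each $D_{N}(f_{s,t})$ is entire in these parameters) transfer the convergence to all derivatives. Establishing this uniform version of the strong Szeg\H o limit theorem — with error bounds uniform over the relevant family of symbols — is the crux of the argument; once it is in place, the bookkeeping of which power sum pairs with $c_{k}$ or $c_{-k}$ is fixed by the linear responses above and the rest is formal.
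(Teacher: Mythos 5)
Your outline is the classical analytic route to \eqref{BD} --- essentially Bump and Diaconis's own strategy --- and its algebraic skeleton is correct: the Frobenius expansion produces the double character sum, the limit of $D_{N}(f_{s,t})/D_{N}(f)$ factorizes over the modes $k$, and the mixed-derivative identity $\partial_{s}^{n}\partial_{t}^{m}\exp(as+bt+\gamma st)\big|_{s=t=0}=\gamma^{m}a^{n-m}\,m!\,L_{m}^{(n-m)}(-ab/\gamma)$ is the right one, so the per-mode factors do reassemble into $\Delta(f,\phi,\psi)$. However, as submitted the argument has a genuine gap, and it is exactly the one you flag and then defer: the strong Szeg\H o theorem as quoted gives only pointwise (in the parameters) convergence of $D_{N}(f_{s,t})/D_{N}(f)$, and pointwise convergence of holomorphic functions does not by itself allow you to differentiate the asymptotics at the origin. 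To run the Vitali/Cauchy-estimate step you must prove local uniform boundedness in $N$ of $D_{N}(f_{s,t})/D_{N}(f)$ on a complex polydisc, i.e.\ a quantitative version of Szeg\H o's theorem uniform over the family of symbols. This is true and can be proved --- for instance via the Borodin--Okounkov/Geronimo--Case representation of $D_{N}(f)e^{-Nc_{0}-\sum_{k}kc_{k}c_{-k}}$ as a Fredholm determinant, with trace-norm bounds controlled by $\sum_{k}|c_{k}|$ and $\sum_{k}|k||c_{k}|^{2}$, which are locally uniformly bounded in $(s,t)$ since the perturbation touches only finitely many exponent coefficients --- but none of this appears in your proposal, and it is precisely the content that turns the formal computation into a proof. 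A smaller point: pairing $\phi$ (the $\lambda$-expansion) with the unbarred traces is not what \eqref{heinemin} literally dictates, since there $s_{\lambda}$ is the conjugated factor; your pairing is the one that reproduces $c_{-k}^{n_{k}-m_{k}}$ in the stated $\Delta$, and the discrepancy is a transposition convention ($\lambda\leftrightarrow\mu$, equivalently $c_{k}\leftrightarrow c_{-k}$) that should be pinned down consistently.

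For comparison: the paper does not prove this statement at all --- it is quoted from \cite{BumpDiaconis} --- and its own contribution, Theorem \ref{mainth}, subsumes it by a purely algebraic route with no analytic estimates. There one factors $T(f)=T(H(y;z^{-1}))T(H(x;z))$, applies the infinite Cauchy--Binet formula to obtain the exact finite-$N$ identity \eqref{minorsum}, and reads off the limit from Macdonald's identity (Ex.\ I.5.26) together with Gessel's identity \eqref{gessel}; the resulting factor $\sum_{\nu}s_{\lambda/\nu}(y)s_{\mu/\nu}(x)$ is equivalent to the character--Laguerre sum in \eqref{BD} by \cite{Deha2}. Your route, once the uniform Szeg\H o input is supplied, yields the explicit character/Laguerre form directly and explains the Laguerre polynomials as moments of noncentral complex Gaussians; the paper's route yields an exact identity at every finite $N$, valid for arbitrary integrable (e.g.\ Fisher--Hartwig) symbols, with the asymptotics then following with no interchange-of-limits issue whatsoever.
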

Note that the product in the factor $\Delta(f,\phi,\psi)$ is actually finite, since only a finite number of $n_{k}$'s and $m_{k}$'s are distinct from zero for each pair $\phi,\psi$. As mentioned before, we see that in the $N\rightarrow\infty$ limit the Toeplitz minor generated by a regular symbol factors as the corresponding Toeplitz determinant times a sum depending only on $f$ and the partitions $\lambda,\mu$ (and not on $N$). The formula \eqref{BD} can be implemented in MatLab for example, leading to quick evaluations for values of, say, $|\lambda|,|\mu|= 15$. Table \ref{tab.gen} shows some of these values for particular choices of $\lambda$ and $\mu$.

\ytableausetup{boxsize=0.2cm}
\begin{table}
\begin{center}
\begin{tabular}{|c|c|l|c|c|l|}
\hline
$\lambda$ & $\mu$ & $\lim_{N\rightarrow\infty} D_{N}^{\lambda,\mu}(f)/D_{N}(f)$ & $\lambda$ & $\mu$ & $\lim_{N\rightarrow\infty} D_{N}^{\lambda,\mu}(f)/D_{N}(f)$\\
\hline

$\varnothing$ & $\ydiagram{1}$ & $c_{1}$ & $\varnothing$ & $\ydiagram{2}$ & $\frac{1}{2}c_{1}^{2}+c_{2}$\\
$\varnothing$ & $\ydiagram{1,1}$ & $\frac{1}{2}c_{1}^{2}-c_{2}$ & $\varnothing$ & $\ydiagram{3}$ & $\frac{1}{6}c_{1}^{3}+c_{1}c_{2}+c_{3}$ \\
$\varnothing$ & $\ydiagram{1,1,1}$ & $\frac{1}{6}c_{1}^{3}-c_{1}c_{2}+c_{3}$ & $\varnothing$ & $\ydiagram{2,2}$ & $\frac{1}{12}c_{1}^{4}-c_{1}c_{3}+c_{2}^{2}$ \\
\hline 
\multicolumn{6}{c}{} \\
\hline 
$\lambda$ & $\mu$ & \multicolumn{4}{l|}{$\lim_{N\rightarrow\infty} D_{N}^{\lambda,\mu}(f)/D_{N}(f)$} \\
\hline
$\ydiagram{1,1}$ & $\ydiagram{1,1}$ & \multicolumn{4}{l|}{$\frac{1}{4}c_{-1}^{2}c_{1}^{2}+c_{-1}c_{1}-\frac{1}{2}c_{-2}c_{1}^{2}-\frac{1}{2}c_{-1}^{2}c_{2}+c_{-2}c_{2}+1$} \\
$\ydiagram{1}$ & $\ydiagram{3}$ &  \multicolumn{4}{l|}{$\frac{1}{6}c_{-1}c_{1}^{3}+\frac{1}{2}c_{1}^{2}+c_{-1}c_{1}c_{2}+c_{2}+c_{-1}c_{3}$} \\
\hline

\end{tabular}
\vspace{0.5 cm}
\caption{Some values of the formula \eqref{BD}.}
\label{tab.gen}
\end{center}
\end{table}


\section{Toeplitz minors generated by symmetric functions} \label{s.res}

Let us now obtain an equivalent expression for the asymptotic formula \eqref{BD} for the case of Toeplitz minors generated by formal power series.
\begin{theorem} \label{mainth}
Let
\begin{equation*}
    f(z)=H(x;z)H(y;z^{-1}),
\end{equation*}
for some sets of variables $x$ and $y$, where $H$ is given by \eqref{generat}, and assume moreover that the sequences of complete homogeneous symmetric polynomials $(h_{k}(x))$ and $(h_{k}(y))$ are square summable. Then, for any two fixed partitions $\lambda$ and $\mu$ we have
\begin{equation} \label{sumth}
    \lim_{N\to\infty}D_{N}^{\lambda,\mu}(f) = \sum_{\nu}s_{\lambda/\nu}(y)s_{\mu/\nu}(x)\lim_{N\rightarrow\infty}D_{N}(f).
\end{equation}
Note that we understand $f$ as a formal Laurent power series whose coefficients are symmetric functions on $x$ and $y$, and thus the convergence above is in the algebra of formal power series.
\end{theorem}

\begin{proof}
Let us first observe that the limit $\lim_{N\rightarrow\infty}D_{N}(f)$ in the right hand side of \eqref{sumth} is well defined as a formal expression, since by the identities of Gessel and Cauchy we have
\begin{equation}
    \lim_{N\rightarrow\infty}D_{N}(f) = \lim_{N\rightarrow\infty}\sum_{l(\nu)\leq N}s_{\nu}(x)s_{\nu}(y) = \prod_{j=1}^{\infty}\prod_{k=1}^{\infty}\frac{1}{1-x_{j}y_{k}}.
\end{equation}
We will use the following lemma, that has an elementary proof.
\begin{lemma*}
Let $\nu$ be a partition verifying $\nu\subset(d^{N})$, and consider the partition $\overleftarrow{\nu}^{d}=(d-\nu_{N},\dots,d-\nu_{1})$ which is obtained by rotating 180º the complement of $\nu$ in the diagram of the rectangular partition $(d^{N})$. Then the Schur polynomial $s_{\nu}$ verifies
\begin{equation*}
    s_{\nu}(x_{1}^{-1},\dots,x_{N}^{-1})=s_{\overleftarrow{\nu}^{d}}(x_{1},\dots,x_{N})\prod_{j=1}^{N}x_{j}^{-d}.
\end{equation*}
\end{lemma*}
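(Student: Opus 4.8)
The plan is to prove this identity about Schur polynomials in finitely many variables using the bialternant (Weyl character) formula. Recall that for $N$ variables, $s_\nu(x_1,\dots,x_N) = \det(x_j^{\nu_k+N-k})/\det(x_j^{N-k})$, where both numerator and denominator are $N\times N$ determinants and the denominator is the Vandermonde $\prod_{j<k}(x_j-x_k)$. First I would substitute $x_j \mapsto x_j^{-1}$ throughout the numerator determinant for $s_\nu$, obtaining entries $x_j^{-(\nu_k+N-k)}$. The idea is to pull out a common power of $x_j$ from each row so as to convert the negative exponents into the nonnegative exponents that describe the rotated-complement partition $\overleftarrow{\nu}^d$.

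Concretely, I would factor $x_j^{-(d+N-1)}$ out of the $j$-th row of the numerator matrix $(x_j^{-(\nu_k+N-k)})$, which contributes an overall scalar $\prod_{j=1}^N x_j^{-(d+N-1)}$ and leaves a matrix with entries $x_j^{(d+N-1)-(\nu_k+N-k)} = x_j^{(d-\nu_k)+(k-1)}$. The key combinatorial observation is that the sequence of exponents $\{(d-\nu_k)+(k-1)\}_{k=1}^N$, as $k$ runs from $1$ to $N$, is precisely the reversal of the sequence $\{\overleftarrow{\nu}^d_k + N - k\}$ that appears in the bialternant for $s_{\overleftarrow{\nu}^d}$: since $\overleftarrow{\nu}^d_k = d - \nu_{N+1-k}$, reindexing $k \mapsto N+1-k$ in the columns turns one exponent set into the other. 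I would therefore reverse the column order, which reorders the matrix into the bialternant numerator for $s_{\overleftarrow{\nu}^d}(x_1,\dots,x_N)$ up to the sign $(-1)^{\binom{N}{2}}$ of the column permutation.

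The remaining step is to handle the denominators. The Schur polynomial $s_\nu(x_1^{-1},\dots,x_N^{-1})$ carries the denominator $\det(x_j^{-(N-k)}) = \prod_{j<k}(x_j^{-1}-x_k^{-1})$, while $s_{\overleftarrow{\nu}^d}(x_1,\dots,x_N)$ carries $\det(x_j^{N-k}) = \prod_{j<k}(x_j - x_k)$. I would compute the ratio of these two Vandermonde determinants directly: clearing denominators gives $\prod_{j<k}(x_j^{-1}-x_k^{-1}) = \prod_{j<k}\frac{x_k - x_j}{x_j x_k} = (-1)^{\binom{N}{2}} \left(\prod_{j=1}^N x_j^{-(N-1)}\right)\prod_{j<k}(x_j-x_k)$, where the sign $(-1)^{\binom{N}{2}}$ arises from reversing the order of each of the $\binom{N}{2}$ factors, and each variable $x_j$ appears in exactly $N-1$ of the factors. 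Assembling the scalar $\prod_j x_j^{-(d+N-1)}$ from the numerator, the column-reversal sign, and this Vandermonde ratio, the two factors of $(-1)^{\binom{N}{2}}$ cancel and the powers of $x_j$ collect to $\prod_{j=1}^N x_j^{-d}$, yielding the claimed formula. The only point needing mild care is bookkeeping the signs and the exact power of each $x_j$ across the numerator scalar, the column permutation, and the Vandermonde ratio; I expect the sign cancellation and the clean collapse to $x_j^{-d}$ to be the main thing to verify, but no genuine obstacle beyond careful index tracking should arise.
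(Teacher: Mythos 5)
Your proof is correct, and all the bookkeeping you flagged as the delicate part does check out: factoring $x_j^{-(d+N-1)}$ from each row turns the exponents into $d-\nu_k+k-1$, the column reversal $k\mapsto N+1-k$ converts these into $\overleftarrow{\nu}^d_k+N-k$ at the cost of a sign $(-1)^{\binom{N}{2}}$, the inverse-variable Vandermonde equals $(-1)^{\binom{N}{2}}\prod_j x_j^{-(N-1)}\prod_{j<k}(x_j-x_k)$, the two signs cancel, and the powers collapse to $\prod_j x_j^{-d}$. One small point worth noting is that the hypothesis $\nu\subset(d^N)$ is exactly what guarantees the exponents $d-\nu_k+k-1$ are nonnegative and that $\overleftarrow{\nu}^d$ is a genuine partition, so the reversed matrix really is the bialternant numerator of a Schur polynomial. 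For comparison with the source: the paper does not actually supply a proof of this lemma --- it only asserts that the lemma ``has an elementary proof'' and moves on --- so there is nothing to contrast your argument against; your bialternant computation is precisely the kind of elementary argument the authors had in mind, and it fills the gap completely.
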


If $R,S$ are two strictly increasing sequences of natural numbers, we denote by $\det_{R,S}M$ the minor of the matrix $M$ obtained by taking the rows and columns of $M$ indexed by $R$ and $S$, respectively. Using the above lemma with $d=\max{\{\lambda_{1},\mu_{1}\}}$ we see that
\begin{align*}
    D_{N}^{\lambda,\mu}(f) = \int_{U(N)}\overline{s_{\lambda}(M)}s_{\mu}(M)f(M)dM = \int_{U(N)}s_{\overleftarrow{\lambda}^{d}}(M)\overline{s_{\overleftarrow{\mu}^{d}}(M)}f(M)dM = \det_{R,S} T(f),
\end{align*}
where the sequences $R,S$ are given by $R=(r_{j})_{j=1}^{N}=(j+\mu_{N+1-j})_{j=1}^{N}$ and $S=(s_{k})_{k=1}^{N}=(k+\lambda_{N+1-k})_{k=1}^{N}$. Since the Toeplitz matrices generated by each of the factors of $f$ verify $T(f(z))=T(H(y;z^{-1}))T(H(x;z))$, the use of Cauchy-Binet formula gives
\begin{equation} \label{CBth}
    \det_{R,S} T(f(z)) = \sum_{T} \det_{R,T}T(H(y;z^{-1}))\det_{T,S}T(H(x;z)),
\end{equation}
where the summation is over all the strictly increasing sequences $T=(t_{1},\dots,t_{N})$ of length $N$ of positive integers\footnote{We are actually using the infinite dimensional generalization of the Cauchy-Binet formula that appears in \cite{TWwords}. The convergence of the sum in the right hand side follows from the square integrability of $(h_{k}(x))$ and $(h_{k}(y))$.}. There is a correspondence between such sequences and partitions $\nu$ of length $l(\nu)\leq N$, given by $\nu_{N+1-j}=t_{j}-j$, for $j=1,...,N$. Thus, for each $T$ we have
\begin{equation*}
    \det_{T,S} T(H(x;z))  =\det(h_{t_{j}-s_{k}}(x))_{j,k=1}^{N}=\det(h_{j+\nu_{N+1-j}-k-\lambda_{N+1-k})}(x))_{j,k=1}^{N}.
\end{equation*}
Reversing the order of its rows and columns, we see that the last determinant above is $D_{N}^{\lambda,\nu}(H(x;z))$. According to \eqref{skewschur} this is precisely the skew Schur polynomial $s_{\nu/\lambda}(x)$, and an analogous derivation yields $\det_{R,T}T(H(y;z^{-1}))=s_{\nu/\mu}(y)$. We thus obtain
\begin{equation} \label{minorsum}
    D_{N}^{\lambda,\mu}(f) = \sum_{l(\nu)\leq N}s_{\nu/\mu}(y)s_{\nu/\lambda}(x).
\end{equation}
Combining this with the following identity between Schur and skew Schur polynomials (Ex. I.5.26 in \cite{MacDonald})
\begin{equation}
    \sum_{\nu}s_{\nu/\mu}(y)s_{\nu/\lambda}(x) = \sum_{\nu}s_{\lambda/\nu}(y)s_{\mu/\nu}(x)\sum_{\kappa}s_{\kappa}(y)s_{\kappa}(x), \label{auxmac}
\end{equation}
where the sums run over all partitions, we arrived at the desired conclusion, upon identification of the second sum in the right hand side above with the large-$N$ limit of the Toeplitz determinant generated by $f$.
\end{proof}

An analogous reasoning shows that identity \eqref{sumth} holds also for functions of the form
\begin{equation*}
    f(z)=E(x;z)E(y;z^{-1}),
\end{equation*}
after taking the conjugate of all the partitions indexing the skew Schur polynomials in the right hand side of \eqref{sumth}. 

Let us emphasize that the theorem is to be understood as an identity among symmetric functions. However, as usual in this context, one can specialize any algebraically independent family of symmetric functions to any given sequence of, say, real or complex numbers, and extend \eqref{sumth} to an identity involving more general Toeplitz matrices, as long as the formal manipulations are justified after this specialization (see \cite{Stanley,TWwords,BaikRains} for examples of this). Let us consider, for instance, a function $f$ that satisfies the regularity conditions in Szeg\H o's theorem. That is, assume $f(e^{i\theta}) = \exp{(\sum_{k}c_{k}e^{ik\theta})}$, where the coefficients $c_{k}$ satisfy the decay conditions \eqref{szcond}. Then, assuming that $c_{0}=0$ without loss of generality, we can write $f(e^{i\theta}) = f^{+}(e^{i\theta})f^{-}(e^{i\theta})$, where
\begin{equation} \label{factors}
    f^{+}(e^{i\theta}) = \exp{\left(\sum_{k>0}c_{k}e^{ik\theta}\right)} = 1+\sum_{k\geq1}d_{k}^{+}e^{ik\theta}, \qquad f^{-}(e^{i\theta}) = \exp{\left(\sum_{k<0}c_{k}e^{ik\theta}\right)} = 1+\sum_{k\geq1}d_{k}^{-}e^{-ik\theta}.
\end{equation}
Now, recall that the complete homogeneous symmetric polynomials are a complete set of algebraically independent generators of the ring of symmetric functions. Thus, we can consider the specializations
\begin{equation*}
    h_{k}(x)\mapsto d_{k}^{+},\qquad h_{k}(y)\mapsto d_{k}^{-}\qquad (k\geq0)
\end{equation*}
on theorem \ref{mainth} to obtain an identity for the Toeplitz minors generated by an arbitrary function satisfying the conditions in Szeg\H o's theorem. Note also that the skew Schur polynomials above can be defined in terms of the Fourier coefficients $d_{k}^{+},d_{k}^{-}$ by means of the Jacobi-Trudi identities, so that the right hand side in \eqref{sumth} is well defined (note that the sum is actually finite for any fixed pair of partitions $\lambda$ and $\mu$). Therefore, we can rephrase theorem 1 as follows.
\begin{corollary}
Let $f(e^{i\theta})=\exp{(\sum_{k}c_{k}e^{ik\theta})}$, where the $c_{k}$ satisfy conditions \eqref{szcond}, and define $f^{+}$ and $f^{-}$ as in \eqref{factors}. Then,
\begin{equation} \label{sumth2}
    \lim_{N\to\infty}D_{N}^{\lambda,\mu}(f) = \exp{\left(\sum_{k=1}^{\infty}kc_{k}c_{-k}\right)}\sum_{\nu}s_{\lambda/\nu}(d^{-}_{k})s_{\mu/\nu}(d^{+}_{k}),
\end{equation}
where now the convergence is the usual convergence in $\mathbb{C}$, and we have denoted by $s_{\lambda/\mu}(d^{\pm}_{k})$ the determinants
\begin{equation*}
    s_{\lambda/\nu}(d^{\pm}_{k}) = \det{\left( d^{\pm}_{j-\nu_{j}-k+\lambda_{k}}\right)}_{j,k=1}^{\max{(l(\lambda),l(\nu))}}.
\end{equation*}
\end{corollary}
Similar examples where an algebraic result concerning Toeplitz determinants is seen to be equivalent to an analytic one for functions satisfying Szeg\H o's theorem can be found in \cite{TWwords,BorodinOkounkov}, for instance.

We have assumed in the above discussion that $f$ verifies the hypotheses in Szeg\H o's theorem. This was necessary in order for the limit $\lim_{N\rightarrow\infty}D_{N}(f)$ to be finite. Numerical experiments suggest however that \eqref{sumth2} holds for more general functions for which this limit is not finite, such as functions with Fisher-Hartwig singularities. It follows from a generalization of \eqref{BD} due to Lyons \cite{Lyons} that this is indeed true for the case of Toeplitz matrices generated by positive valued functions (as is the case, for instance, of Fisher-Hartwig singularities with zeros or poles, see section \ref{s.FH}), but we have been unable to extend this result to the most general case of arbitrary functions with Fisher-Hartwig singularities.

~

We conclude this section showing that exact formulas are available when the function $f$ can be obtained as a specialization with a finite number of nonzero variables. There are two possibilities:

\begin{itemize}
    \item Case 1: There is a factor of the type $H$ specialized to a finite set of variables. Suppose $f$ is of the form $f(z)=H(y_{1},\dots,y_{d};z^{-1})H(x;z)$. Then, in the same fashion as in Baxter's identity \eqref{baxterid}, the corresponding Toeplitz determinant stabilizes and we obtain the formula
    \begin{equation*}
        D_{N}^{\lambda,\mu}\left(\prod_{k=1}^{d}\frac{1}{1-y_{k}z^{-1}}\prod_{j=1}^{\infty}\frac{1}{1-x_{j}z}\right) = \prod_{k=1}^{d}\prod_{j=1}^{\infty}\frac{1}{1-x_{j}y_{k}}\sum_{\nu}s_{\lambda/\nu}(y)s_{\mu/\nu}(x),
    \end{equation*}
    that holds for every $N\geq d$. An analogous result holds for symbols of the type $f(z)=H(y_{1},\dots,y_{d};z^{-1})E(x;z)$.

    \item Case 2: There is a factor of the type $E$ specialized to a finite set of variables. We assume, without loss of generality, that $f$ is of the form $f(z)=E(y_{1},\dots,y_{d};z^{-1})E(x;z)$. As mentioned above, no $N$-independent formula is available for these symbols. However, it follows from \eqref{skewschur} that
    \begin{align} \begin{split} \label{EEschur}
        s_{((d^{N})+\mu/\lambda)'}(y_{1}^{-1},\dots,y_{d}^{-1},x) &= D_{N}^{\lambda,\mu}\left( \prod_{k=1}^{d} y_{k}^{-1} \prod_{k=1}^{d}(1+y_{k}z^{-1})\prod_{j=1}^{\infty}(1+x_{j}z) \right) = \\
        &= \prod_{k=1}^{d} y_{k}^{-N}D_{N}^{\lambda,\mu}(E(y_{1},\dots,y_{d};z^{-1})E(x;z)),
    \end{split} \end{align}
    and we see that in this case the Toeplitz minor can be expressed essentially as the specialization of a single skew Schur polynomial, indexed by the shape
    \begin{equation*}
        \begin{tikzpicture}[scale=0.33]
    
        \draw[step=1cm] (0,0) grid (1,5);
        \draw[step=1cm] (1,1) grid (2,5);
        \draw[step=1cm] (2,2) grid (3,5);
        \draw[step=1cm] (3,2) grid (4,6);
        \draw[step=1cm] (4,3) grid (6,7);
        
        \coordinate (mu1) at (0,0);
        \coordinate (mu2) at (0,3);
        \coordinate (d1) at (6,3); 
        \coordinate (N1) at (0,7);
        \coordinate (N2) at (6,7);
        \coordinate (l1) at (0,5); 
    
        \draw[decoration={brace,raise=.1cm,amplitude=.2cm},decorate] (mu1) -- node[left=.3cm]{$\mu'$} (mu2);
        \draw[decoration={brace,mirror,raise=.1cm,amplitude=.2cm},decorate] (d1) -- node[right=.3cm]{$d$} (N2);
        \draw[decoration={brace,raise=.1cm,amplitude=.2cm},decorate] (N1) -- node[above=.3cm]{$N$} (N2);
        \draw[decoration={brace,raise=.1cm,amplitude=.2cm},decorate] (l1) --        node[left=.3cm]{$\lambda'$} (N1);
        \end{tikzpicture}
    \end{equation*}
    A similar identity was obtained in \cite{Alexandersson}. Comparing with the analogue of equation \eqref{minorsum} for this symbol we see that \eqref{EEschur} coincides with
    \begin{equation*}
        \prod_{k=1}^{d}y_{k}^{-N}\sum_{\nu\subset{(N^{d})}}s_{\nu/\mu'}(y_{1},...,y_{d})s_{\nu/\lambda'}(x),
    \end{equation*}
    where the (finite) sum runs over all partitions $\nu$ satisfying $l(\nu)\leq N$ and $\nu_{1}\leq d$.
\end{itemize}


\section{Inverses of Toeplitz matrices and skew Schur polynomials} \label{s.inv}

The usual formula for the inversion of a matrix in terms of its cofactors reads as follows for the case of Toeplitz matrices
\begin{equation} \label{Tinv}
    \left(T_{N}^{-1}(f)\right)_{j,k} = (-1)^{j+k}\frac{D_{N-1}^{(1^{k-1}),(1^{j-1})}(f)}{D_{N}(f)}.
\end{equation}
Hence, whenever the inverse of a Toeplitz matrix is known explicitly, formula \eqref{Tinv} yields explicit evaluations of the formulas appearing in section \ref{s.res}. In particular, if the function $f$ is of the form $f(z)=E(y_{1},...,y_{d};z^{-1})E(x;z)$, the Toeplitz minor in the right hand side above has several expressions: in terms of the inverse of the corresponding Toeplitz matrix
\begin{align}
    D_{N}^{(1^{k}),(1^{j})}(f)&=(-1)^{j+k}D_{N+1}(f)(T_{N+1}^{-1}(f))_{j+1,k+1}, \label{mininv} \\
\intertext{as a specialization of a skew Schur polynomial}
    D_{N}^{(1^{k}),(1^{j})}(f)&= s_{(\underbrace{\scriptstyle N,...,N}_{d},j)/(k)}(y_{1}^{-1},\dots,y_{d}^{-1},x) \prod_{r=1}^{d}y_{r}^{N}, \label{ssinv} \\
\intertext{and as the multiple integral}
    D_{N}^{(1^{k}),(1^{j})}(f) &= \label{intinv} \\
    \frac{1}{N!}\frac{1}{(2\pi)^N}\int_{0}^{2\pi}...\int_{0}^{2\pi}e_{k}(e^{-i\theta_{1}}&,...,e^{-i\theta_{N}})e_{j}(e^{i\theta_{1}},...,e^{i\theta_{N}})\prod_{j=1}^{N}f(e^{i\theta_{j}})\prod_{1\leq j<k\leq N}|e^{i\theta_{j}}-e^{i\theta_{k}}|^{2}d\theta_{1}... d\theta_{N}, \nonumber
\intertext{where $e_{j},e_{k}$ are elementary symmetric polynomials \eqref{generat} (we assume in the three last identities that $N\geq 1$ and $0\leq j,k\leq N$). Moreover, formula \eqref{sumth} gives the asymptotic behaviour}
    \lim_{N\rightarrow\infty}D_{N}^{(1^{k}),(1^{j})}(f) &= \sum_{r=0}^{\min{(j,k)}}h_{k-r}(y)h_{j-r}(x) \lim_{N\rightarrow\infty} D_{N}(f), \label{mainsuminv}
\end{align}
where the convergence is as formal series or the usual convergence, according to the context (note that the partitions indexing the sum in \eqref{sumth} are now conjugated). It would be interesting to compare Widom’s asymptotic formula for the inverses of Toeplitz matrices \cite{Widom} with \eqref{mainsuminv}. Other work with formulas for the inverses of Toeplitz matrices is \cite{RambourS}.

In the following, we recall some known explicit inverses of Toeplitz matrices and compute another two in order to obtain evaluations for the Toeplitz minor $D_{N}^{(1^{k}),(1^{j})}(f)$. Comparing these with equations \eqref{ssinv} and \eqref{intinv} we will obtain explicit formulas for some specializations of the above skew Schur polynomial and for the above integral for some choices of the symbol $f$, as well as their asymptotics. We assume in the following invertibility of all the matrices involved.


\subsection{Tridiagonal Toeplitz matrices} \label{s.cheby}

A simple example is given by the Toeplitz matrix generated by the function $f(z)=E(y;z^{-1})E(x;z)$, where $x$ and $y$ are single (nonzero) variables
\begin{equation} \label{tridtoep}
	T_{N}(E(y;z^{-1})E(x;z))= \begin{pmatrix}
	1+xy & y & \\ x & 1+xy & \ddots \\	& \ddots & \ddots 
	\end{pmatrix}.
\end{equation} 
In \cite{Trench} an exact formula for the inverses of banded Toeplitz matrices was obtained (see \cite{Maximenko} for a recent work with a different proof). The inverse of a tridiagonal Toeplitz matrix has an expression in terms of Chebyshev polynomials of the second kind \cite{Szego}. These are defined by the recurrence relation
\begin{equation*}
	\begin{dcases}
	U_{n+1}(z)=2zU_{n}(z)-U_{n-1}(z) \qquad (n\geq 1),\\
	U_{0}(z)=1,\quad U_{1}(z)=2z.
	\end{dcases}
\end{equation*}
The determinant of the matrix \eqref{tridtoep} is then given by \cite{FonsecaPetronilho}
\begin{equation} \label{triddet}
    D_{N}(E(y;z^{-1})E(x;z)) = \frac{(xy)^{N+1}-1}{xy-1}= (xy)^{N/2}U_{N}(c)  \qquad \left(c=\frac{1+xy}{2\sqrt{xy}}\right),
\end{equation}
and its inverse by
\begin{equation*}
	(T_{N}^{-1}(E(y;z^{-1})E(x;z)))_{j,k} = \begin{dcases}
	(-1)^{j+k}\frac{y^{k-j}}{(xy)^{(k-j+1)/2}}\frac{U_{j-1}(c)U_{N-k}(c)}{U_{N}(c)} & (j\leq k), \\
	(-1)^{j+k}\frac{x^{j-k}}{(xy)^{(j-k+1)/2}}\frac{U_{k-1}(c)U_{N-j}(c)}{U_{N}(c)} & (j> k).
	\end{dcases}
\end{equation*}
Inserting these expressions in equation \eqref{ssinv} we obtain the following expression for an arbitrary skew Schur polynomial indexed by a shape of at most two rows and specialized to two variables
\begin{align*}
	s_{(N,j)/(k)}(x,y^{-1}) &= (xy^{-1})^{(N+j-k)/2}U_{\min{(j,k)}}(c)U_{N-\max{(j,k)}}(c) = \\
	&= \frac{1}{x^{k}y^{N+j-k}}\sum_{r=0}^{\min{(j,k)}}(xy)^{r}\sum_{r=\max{(j,k)}}^{N}(xy)^{r},
\end{align*}
for $j,k=0,...,N$ and $N\geq1$. It is well known that a Schur polynomial specialized to two variables is equal to a Chebyshev polynomial \cite{KadellSC}. We also obtain from formula \eqref{mainsuminv} that if $|x|,|y|<1$ then
\begin{equation*}
    \lim_{N\rightarrow\infty}s_{(N,j)/(k)}(x,y^{-1})y^{N} = x^{j}y^{k}\frac{(xy)^{-\min{(j,k)}-1}-1}{(xy)^{-1}-1}.
\end{equation*}


\subsection{The pure Fisher-Hartwig singularity} \label{s.FH}

The so-called pure Fisher-Hartwig singularity is the function
\begin{equation} \label{pureFH}
    |1-e^{i\theta}|^{2\alpha}e^{i\beta(\theta-\pi)}\qquad(0<\theta<2\pi),
\end{equation}
where the parameters $\alpha,\beta$ satisfy $\textrm{Re}(\alpha)>-1/2$ and $\beta\in\mathbb{C}$. The factor $|1-e^{i\theta}|^{2\alpha}$ may have a zero, a pole, or an oscillatory singularity at the point $z=1$, while the factor $e^{i\beta(\theta-\pi)}$ has a jump if $\beta$ is not an integer. Thus, depending on the different values of the parameters $\alpha$ and $\beta$, the symbol above may violate the regularity conditions in Szeg\H o's theorem. It will be more convenient to work with the equivalent factorization \cite{BottSilb}
\begin{equation*}
    (1-e^{i\theta})^{\gamma}(1-e^{-i\theta})^{\delta}.
\end{equation*}
This function coincides with \eqref{pureFH} if $\gamma=\alpha+\beta$ and $\delta=\alpha-\beta$; we will assume in the following that the parameters $\gamma$ and $\delta$ are positive integers. We can then express this function as the specialization
\begin{equation} \label{specFH}
    f(z)=\varphi_{\gamma,\delta}(z)=E(\underbrace{1,...,1}_{\delta};z^{-1})E(\underbrace{1,...,1}_{\gamma};z).
\end{equation}
Functions with general Fisher-Hartwig singularities are obtained as the product of a function verifying the regularity conditions in Szeg\H o's theorem times a finite number of translated pure singularities of the form $\varphi_{\gamma_{r},\delta_{r}}(e^{i(\theta-\theta_{r})})$. Each of these factors has a singularity with parameters $\gamma_{r},\delta_{r}$ at the point $e^{i\theta_{r}}$.

~

The inverse of the Toeplitz matrix generated by the pure FH singularity can be computed by means of the Duduchava-Roch formula \cite{Dudu,Roch,BottDR}
\begin{equation*}
    T((1-z)^{\gamma})M_{\gamma+\delta}T((1-z^{-1})^{\delta}) = \frac{\Gamma{(\gamma+1)}\Gamma{(\delta+1})}{\Gamma{(\gamma+\delta+1)}}M_{\delta}T(\varphi_{\gamma,\delta})M_{\gamma},
\end{equation*}
where $M_{a}$ is the diagonal matrix with entries $(M_{a})_{k,k}=\binom{a+k-1}{k-1}$, for $k\geq1$. B\"ottcher and Silbermann \cite{BottSilbFH} used this formula to give an explicit expression for the determinant of the Toeplitz matrix generated by the pure FH singularity
\begin{equation} \label{FHdet}
    D_{N}\left( \varphi_{\gamma,\delta} \right) = G(N+1)\frac{G(\gamma+\delta+N+1)}{G(\gamma+\delta+1)}\frac{G(\gamma+1)}{G(\gamma+N+1)} \frac{G(\delta+1)}{G(\delta+N+1)},
\end{equation}
where $G$ is the Barnes function \cite{Barnes}. Also the inverse of the corresponding Toeplitz matrix can be computed explicitly by means of this formula \cite{BottDR}
\begin{equation*}
    (T_{N}^{-1}(\varphi_{\gamma,\delta}))_{j,k}= (-1)^{j+k}\frac{\Gamma(\gamma+j)\Gamma(\delta+k)}{\Gamma(j)\Gamma(k)}\sum_{r=\max{(j,k)}}^{N}\frac{\Gamma(r)}{\Gamma(\gamma+\delta+r)}\binom{\gamma+r-k-1}{r-k}\binom{\delta+r-j-1}{r-j}.
\end{equation*}
Inserting these expressions in equation \eqref{ssinv} we obtain
\begin{align}
    &s_{(\underbrace{\scriptstyle N,...,N}_{d},j)/(k)}(1^{M}) =\, G(N+2)\frac{G(M+N+2)}{G(M+1)}\frac{G(M-d+1)}{G(M-d+N+2)}\frac{G(d+1)}{G(d+N+2)}\times \label{evskewFH} \\
    &\frac{\Gamma(M-d+j+1)}{\Gamma(j+1)}\frac{\Gamma(d+k+1)}{\Gamma(k+1)} \sum_{r=\max{(j,k)}}^{N}\frac{\Gamma(r+1)}{\Gamma(M+r+1)}\binom{M-d+r-k-1}{r-k}\binom{d+r-j-1}{r-j},  \nonumber
\end{align}
for $j,k\leq N$ and $M>d$ (or $M\geq d$, if $j=0$). 
The above formula recovers known evaluations whenever $k=0$ and thus the function in the left hand side above is a Schur polynomial (these can be computed by means of the hook-content formula \cite{Stanley}, for instance). Explicit expressions for such specialization of skew Schur polynomials indexed by partitions of certain shapes have been obtained recently in \cite{MPP}, and coincide with the above formula when the shapes are the same. The shapes covered by the above formula are not a subset nor a superset of those considered in \cite{MPP}.

~

Using expression \eqref{pureFH}, we see that the integral form of a Toeplitz minor generated by the pure Fisher-Hartwig generality
\begin{align}
    &D_{N}^{\lambda,\mu}(\varphi_{\gamma,\delta})=s_{((\delta^{N})+\mu/\lambda)'}(1^{\gamma+\delta}) = \label{morris} \\
    \frac{1}{N!}&\frac{1}{(2\pi)^{N}}\int_{0}^{2\pi}...\int_{0}^{2\pi}s_{\lambda}(e^{-i\theta})s_{\mu}(e^{i\theta})\prod_{j=1}^{N}e^{\frac{1}{2}i\theta_{j}(\gamma-\delta)}|1+e^{i\theta_{j}}|^{\gamma+\delta}\prod_{1\leq j<k\leq N}|e^{i\theta_{j}}-e^{i\theta_{k}}|^{2}d\theta_{1}...d\theta_{N}, \nonumber
\end{align}
is the unitary version of Selberg integral known as Morris integral, with the insertion of two Schur polynomials (we denote above $s_{\mu}(e^{i\theta})=s_{\mu}(e^{i\theta_{1}},\dots,e^{i\theta_{N}})$). Explicit formulas are known \cite{ForWar} for the evaluation of this integral, with and without the insertion of a single Schur polynomial $s_{\mu}$, although its expression as the specialization of a skew Schur polynomial \eqref{morris} appears to be new. We note also that its minor representation allows a direct computation for the case of a single polynomial
\begin{equation} \label{minorFH}
    D_{N}^{\varnothing,\mu}(\varphi_{\gamma,\delta}) = D_{N}(\varphi_{\gamma,\delta}) s_{\mu}(1^{N})\prod_{k=1}^{N}\frac{\Gamma(\gamma+k)}{\Gamma(\gamma+k-\mu_{k})}\frac{\Gamma(\delta+N-k+1)}{\Gamma(\delta+N-k+\mu_{k}+1)}.
\end{equation}
A proof of this identity is sketched in the appendix. An explicit expression of the integral \eqref{morris} with the insertion of two Schur polynomials is known for the case $\gamma=-\delta$ \cite{Kadell2}. Substituting $M-d$ by $\gamma$ and $d$ by $\delta$, formula \eqref{evskewFH} gives an explicit evaluation of this integral valid for general values\footnote{We have only proved the validity of the formula for integer values of $\gamma$ and $\delta$. However, by Carlson's theorem the formula holds for any positive $\gamma$ and $\delta$.} of $\gamma$ and $\delta$ whenever the Schur polynomials reduce to elementary symmetric polynomials $s_{\lambda}=e_{k}$, $s_{\mu}=e_{j}$.




\subsection{Principal specializations}

In order to study the principal specialization $x_{j}=q^{j-1}$ in the above formulas, we recall the well known method of Borodin for obtaining the inverse of the moment matrix of a biorthogonal ensemble. We follow the presentation in \cite{Borodin}, where details and proofs can be found. The starting point is a random matrix ensemble of the form
\begin{equation*}
    \int\dots\int \det{\left(\xi_{j}(z_{k})\right)_{j,k=1}^{N}}\det{\left(\eta_{j}(z_{k})\right)_{j,k=1}^{N}}\prod_{j=1}^{N}f(z_{j})dz_{j}
\end{equation*}
(up to a constant), for a weight function $f$ supported on some domain and two families of functions $(\xi_{j})$ and $(\eta_{j})$. If one is able to find two new families $(\zeta_{j})$ and $(\psi_{j})$ that biorthogonalize\footnote{Note that we are actually considering biorthonormal functions; we stick to the original terminology of \cite{Borodin} here and below and speak of biorthogonal functions in the following.} the former with respect to the weight $f$, that is
\begin{align} \begin{split} \label{biorth}
    \zeta_{j}\in\textrm{Span}&\{\xi_{1},\dots,\xi_{j}\}, \qquad \psi_{j}\in\textrm{Span}\{\eta_{1},\dots,\eta_{j}\}, \\
    &\int \zeta_{j}(z)\psi_{k}(z)f(z)dz=\delta_{j,k},
\end{split} \end{align}
then the matrix of coefficients of the kernel
\begin{equation} \label{kernel}
    K_{N}(z,\omega)=\sum_{r=1}^{N}\zeta_{r}(z)\psi_{r}(\omega) = \sum_{j,k=1}^{N}c_{j,k}\xi_{j}(z)\eta_{k}(\omega)
\end{equation}
satisfies
\begin{equation*}
    \left[(c_{j,k})_{j,k=1}^{N}\right]^{-1} = \left(\int \xi_{k}(z)\eta_{j}(z)f(z)dz\right)_{j,k=1}^{N}.
\end{equation*}
If the ensemble is an orthogonal polynomial ensemble, then the moment matrix on the right hand side above is a Hankel matrix, the functions $\xi_{j}$ and $\eta_{j}$ are the monomials $z^{j-1}$, and we have that $\zeta_{j}=\psi_{j}=p_{j}$, the orthogonal polynomials with respect to the weight function $f$, that is supported on the real line. The case where the moment matrix on the right hand side above is the Toeplitz matrix generated by a function $f$ supported on the unit circle corresponds to the biorthogonal ensemble with functions $\xi_{j}(z)=z^{-(j-1)}, \eta_{j}(z)=z^{j-1}$. Thus, the biorthogonality condition \eqref{biorth} amounts to finding two families of polynomials $p_{j}$ and $q_{j}$ such that
\begin{equation} \label{biorthpol}
    \frac{1}{2\pi}\int_{0}^{2\pi} p_{j}(e^{-i\theta})q_{k}(e^{i\theta})f(e^{i\theta})d\theta = \delta_{j,k}.
\end{equation}
Let us remark that only when the Toeplitz matrix is hermitian (that is, when the function $f$ is real valued), these polynomials verify $p_{j}(e^{-i\theta})=\overline{q_{j}(e^{i\theta})}$, the $q_{j}$ are the orthogonal polynomials with respect to $f$, and the kernel above is the usual Christoffel-Darboux kernel (see \cite{Baxter,Kailath} for more details). In general, one needs to consider a biorthogonal ensemble as above. Nevertheless, one can compute the polynomials $(p_{j})$ and $(q_{j})$ in a similar fashion to the orthogonal case.
\begin{lemma*}
    Suppose the determinants $D_{N}(f)$ are nonzero for every $N$. Then, the polynomials $p_{j}$ and $q_{j}$ in \eqref{biorthpol} are given by
    \begin{align*}
        p_{j}(z)&= \frac{1}{(D_{j}(f)D_{j+1}(f))^{1/2}} \begin{vmatrix}
        d_{0} & d_{-1} & \dots & d_{-j} \\
        d_{1} & d_{0} & \dots & d_{-(j-1)} \\
        \vdots & \vdots & & \vdots & \\
        d_{j-1} & d_{j-2} & & d_{-1} \\
        1 & z & \dots & z^{j} \end{vmatrix}, \\ q_{j}(z)&= \frac{1}{(D_{j}(f)D_{j+1}(f))^{1/2}} \begin{vmatrix}
        d_{0} & d_{-1} & \dots & d_{-(j-1)} & 1 \\
        d_{1} & d_{0} & \dots & d_{-(j-2)} & z \\
        \vdots & \vdots & & \vdots & \vdots \\
        d_{j} & d_{j-1} & \dots & d_{1} & z^{j} \end{vmatrix}.
    \end{align*}
\end{lemma*}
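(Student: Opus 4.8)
The plan is to verify the two requirements of \eqref{biorth}--\eqref{biorthpol} separately, starting with the easy span condition and then the orthonormality integral. Expanding each determinant along its row (resp. column) of monomials $1,z,\dots,z^{j}$ shows at once that $p_{j}$ and $q_{j}$ are polynomials of degree exactly $j$: the coefficient of $z^{j}$ is the top-left $j\times j$ minor $D_{j}(f)$ divided by $(D_{j}(f)D_{j+1}(f))^{1/2}$, i.e. $(D_{j}(f)/D_{j+1}(f))^{1/2}$, which is nonzero by the hypothesis that all $D_{N}(f)$ are nonzero. Hence $p_{j},q_{j}\in\mathrm{Span}\{1,\dots,z^{j}\}$, as \eqref{biorth} demands, and it remains only to establish the integral $\frac{1}{2\pi}\int_{0}^{2\pi}p_{j}(e^{-i\theta})q_{k}(e^{i\theta})f(e^{i\theta})\,d\theta=\delta_{j,k}$, which I would reduce to pairings of $p_{j}$ against pure monomials.

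The heart of the argument is the evaluation of $\frac{1}{2\pi}\int_{0}^{2\pi}p_{j}(e^{-i\theta})e^{im\theta}f(e^{i\theta})\,d\theta$ for $0\le m\le j$. I would expand the determinant defining $p_{j}$ along its last row $(1,z,\dots,z^{j})$, put $z=e^{-i\theta}$, and use multilinearity of the determinant to pull the integral inside: once multiplied by $e^{im\theta}f(e^{i\theta})$ and integrated, each entry $e^{-ic\theta}$ of the last row collapses to a single Fourier coefficient of $f$, since $\frac{1}{2\pi}\int_{0}^{2\pi}e^{i(m-c)\theta}f\,d\theta$ is exactly one of the $d_{n}$. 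The last row is thereby replaced by a row of Fourier coefficients, and the key observation is that for $0\le m\le j-1$ this new row duplicates one of the first $j$ rows of the defining matrix, so the determinant vanishes; for $m=j$ the matrix becomes precisely $T_{j+1}(f)$, and the integral equals $D_{j+1}(f)/(D_{j}(f)D_{j+1}(f))^{1/2}=(D_{j+1}(f)/D_{j}(f))^{1/2}$. The identical reasoning applied to the column of monomials in $q_{k}$ gives $\frac{1}{2\pi}\int_{0}^{2\pi}e^{-im\theta}q_{k}(e^{i\theta})f(e^{i\theta})\,d\theta=0$ for $0\le m\le k-1$.

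Assembling these facts yields the biorthogonality. Writing $q_{k}(z)=\sum_{r=0}^{k}b_{r}z^{r}$, one has $\frac{1}{2\pi}\int_{0}^{2\pi}p_{j}(e^{-i\theta})q_{k}(e^{i\theta})f\,d\theta=\sum_{r=0}^{k}b_{r}\,\frac{1}{2\pi}\int_{0}^{2\pi}p_{j}(e^{-i\theta})e^{ir\theta}f\,d\theta$; for $k<j$ every term vanishes by the monomial computation, and for $k>j$ the symmetric computation (expanding $p_{j}$ into monomials and invoking the vanishing of the $q_{k}$ pairings) gives zero as well. On the diagonal $k=j$ only the top-degree term $r=j$ survives, whose coefficient $b_{j}$ is again the minor $D_{j}(f)/(D_{j}(f)D_{j+1}(f))^{1/2}$; multiplying this by $(D_{j+1}(f)/D_{j}(f))^{1/2}$ produces exactly $1$. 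I expect the only real obstacle to be bookkeeping: tracking the indices and signs of the coefficients $d_{n}$ carefully enough that the integrated last row genuinely coincides with one of the first $j$ rows, which is precisely the mechanism that forces the off-diagonal determinants to collapse.
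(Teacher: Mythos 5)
Your proposal is correct and follows essentially the same route as the paper's proof: both verify the biorthogonality condition \eqref{biorthpol} by expanding one polynomial into monomials and using multilinearity of the determinant, so that the row (or column) of monomials, once integrated against $e^{im\theta}f$, becomes a row of Fourier coefficients that duplicates one of the existing rows when $m<j$ (forcing the determinant to vanish) and completes the Toeplitz matrix $T_{j+1}(f)$ when $m=j$ (giving the normalization $D_{j+1}(f)/(D_{j}(f)D_{j+1}(f))^{1/2}$), with the same three-way case split $k<j$, $k>j$, $k=j$. The only difference is cosmetic: the paper invokes existence and uniqueness up to constants (Proposition 2.9 of \cite{Borodin}) before verifying biorthogonality, whereas you check the span and leading-coefficient condition directly, which is equally valid.
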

\begin{proof}
The condition on the determinants implies the existence of the polynomials themselves (see Proposition 2.9 in \cite{Borodin}, for instance), and they are uniquely determined up to multiplicative constants. Hence, it suffices to verify the biorthogonality condition \eqref{biorthpol}. We denote
\begin{equation} \label{pols}
    p_{j}(z)=\sum_{r=0}^{j}a^{(j)}_{r}z^{r}, \qquad q_{k}(z)=\sum_{r=0}^{k}b^{(k)}_{r}z^{r}.
\end{equation}
Now, if $j\geq k$ in \eqref{biorthpol} we can rewrite this integral as the sum
\begin{align*}
    \frac{1}{2\pi}\int_{0}^{2\pi}& p_{j}(e^{-i\theta})q_{k}(e^{i\theta})f(e^{i\theta})d\theta = \frac{1}{(D_{k}(f)D_{k+1}(f)D_{j}(f)D_{j+1}(f))^{1/2}}\times \\
    &\sum_{r=0}^{k}b^{(k)}_{r}\begin{vmatrix}
        d_{0} & d_{-1} & \dots & d_{-j} \\
        d_{1} & d_{0} & \dots & d_{-(j-1)} \\
        \vdots & \vdots & & \vdots & \\
        d_{j-1} & d_{j-2} & & d_{-1} \\
        \frac{1}{2\pi}\int_{0}^{2\pi}e^{ir\theta}f(e^{i\theta})d\theta & \frac{1}{2\pi}\int_{0}^{2\pi}e^{i(r-1)\theta}f(e^{i\theta})d\theta & \dots & \frac{1}{2\pi}\int_{0}^{2\pi}e^{i(r-j)\theta}f(e^{i\theta})d\theta \end{vmatrix},
\end{align*}
which vanishes if $j>k$ and equals $1$ if $j=k$, since the last row in the above determinants is precisely $(d_{r},d_{r-1},\dots,d_{r-j})$. Analogously, if $j<k$ in \eqref{biorthpol} the integral equals
\begin{equation*}
    \frac{1}{(D_{k}(f)D_{k+1}(f)D_{j}(f)D_{j+1}(f))^{1/2}} \sum_{r=0}^{j}a^{(j)}_{r}\begin{vmatrix}
        d_{0} & d_{-1} & \dots & d_{-(k-1)} & \frac{1}{2\pi}\int_{0}^{2\pi}e^{-ir\theta}f(e^{i\theta})d\theta \\
        d_{1} & d_{0} & \dots & d_{-(k-2)} & \frac{1}{2\pi}\int_{0}^{2\pi}e^{-i(r-1)\theta}f(e^{i\theta})d\theta \\
        \vdots & \vdots & & \vdots & \vdots \\
        d_{k} & d_{k-1} & \dots & d_{1} & \frac{1}{2\pi}\int_{0}^{2\pi}e^{-i(r-k)\theta}f(e^{i\theta})d\theta \end{vmatrix},
\end{equation*}
and again all the determinants in the sum vanish.
\end{proof}

We now use this result to study the principal specialization of skew Schur polynomials indexed by the shapes considered earlier. We assume in the following that $q$ is a new (real) variable verifying $|q|<1$. We will denote by $\Gamma_{q}$ and $G_{q}$ the $q$-Gamma and $q$-Barnes functions \cite{Nishizawa}, that in particular verify
\begin{equation} \label{qfuncts}
    \Gamma_{q}(k+1)=\frac{\prod_{j=1}^{k}(1-q^{j})}{(1-q)^{k}}=\frac{(q;q)_{k}}{(1-q)^{k}}, \qquad G_{q}(k+1)=\prod_{j=1}^{k-1}\Gamma_{q}(j+1),
\end{equation}
whenever $k$ is a natural number (we assume that an empty product takes the value $1$). The $q$-binomial coefficient is then given by
\begin{align*}
     \qbinom{\omega}{z}=\frac{\Gamma_{q}(\omega+1)}{\Gamma_{q}(z+1)\Gamma_{q}(\omega-z+1)} \qquad\left(\textrm{Re}(\omega)\geq\textrm{Re}(z)>0\right).
\end{align*}
These functions coincide with their classical counterparts in the $q\rightarrow1$ limit, that is
\begin{equation*}
    \lim_{q\rightarrow1}\Gamma_{q}(z)=\Gamma(z), \qquad \lim_{q\rightarrow1}G_{q}(z)=G(z), \qquad \lim_{q\rightarrow1}\qbinom{\omega}{z}=\binom{\omega}{z},
\end{equation*}
for all the $\omega$ and $z$ such that the right hand sides above make sense. We consider the following specialization \cite{FoataHan}
\begin{equation*}
    f(z)=\Theta_{\gamma,\delta}(z)=E(1,q,\dots,q^{\delta-1};z^{-1})E(q,q^{2},\dots,q^{\gamma};z) = \sum_{k=-\delta}^{\gamma}\qbinom{\delta+\gamma}{\delta+k}q^{k(k+1)/2}z^{k},
\end{equation*}
for some positive integers $\gamma$ and $\delta$. The Toeplitz determinant generated by this function equals
\begin{equation*}
    D_{N}(\Theta_{\gamma,\delta}) = G_{q}(N+1)\frac{G_{q}(\delta+\gamma+N+1)}{G_{q}(\delta+\gamma+1)}\frac{G_{q}(\delta+1)}{G_{q}(\delta+N+1)} \frac{G_{q}(\gamma+1)}{G_{q}(\gamma+N+1)},
\end{equation*}
and the biorthogonal polynomials $p_{j},q_{j}$ are given by
\begin{align} \begin{split} \label{pspecpols}
    p_{j}(z)&=\left(\frac{(q;q)_{\delta+j}(q;q)_{\gamma+j}}{(q;q)_{j}(q;q)_{\delta+\gamma+j}}\right)^{1/2}\sum_{r=0}^{j}(-1)^{j+r}\qbinom{j}{r}\frac{(q;q)_{\gamma+r}}{(q;q)_{\gamma+j}}\frac{(q;q)_{\delta+j-r-1}}{(q;q)_{\delta-1}}z^{r}, \\
    q_{j}(z)&=\left(\frac{(q;q)_{\delta+j}(q;q)_{\gamma+j}}{(q;q)_{j}(q;q)_{\delta+\gamma+j}}\right)^{1/2}\sum_{r=0}^{j}(-1)^{j+r}\qbinom{j}{r}\frac{(q;q)_{\gamma+j-r-1}}{(q;q)_{\gamma-1}}\frac{(q;q)_{\delta+r}}{(q;q)_{\delta+j}}q^{r}z^{r},
\end{split} \end{align}
where $(q;q)_{k}$ is as defined in \eqref{qfuncts}. The last three identities can be proved directly from their determinantal expressions. We do not include the computations here but point to the second method of proof in \cite{BWFH}, that can be generalized to the present setting. A similar computation is included in the appendix as an example. Recalling the notation \eqref{pols}, we have that the kernel \eqref{kernel} is then given by
\begin{align*}
    &K_{N+1}(z,\omega)=\sum_{r=0}^{N}p_{r}(z)q_{r}(\omega^{-1})= \sum_{j,k=0}^{N}\left(\sum_{r=\max{(j,k)}}^{N}a_{j}^{(r)}b_{k}^{(r)}\right)z^{j}\omega^{-k} = \\
    &\sum_{j,k=0}^{N}\left(\sum_{r=\max{j,k}}^{N}(-1)^{j+k}q^{j}\frac{\Gamma_{q}(\delta+j+1)\Gamma_{q}(\gamma+k+1)\Gamma_{q}(r+1)}{\Gamma_{q}(j+1)\Gamma_{q}(k+1)\Gamma_{q}(\delta+\gamma+r+1)}\qbinom{\gamma+r-k-1}{r-k}\qbinom{\delta+r-j-1}{r-j} \right)z^{j}\omega^{-k}.
\end{align*}
Moreover, the coefficient of $z^{j}\omega^{-k}$ in the above sum is the $(j+1,k+1)$-th entry of the inverse of the matrix $T_{N+1}(\Theta_{\gamma,\delta})$. Inserting this into expression \eqref{ssinv} we obtain
\begin{align*}
    &s_{(\underbrace{\scriptstyle N,...,N}_{d},j)/(k)}(1,q,\dots,q^{M-1}) =  \\ &q^{dj-(d-1)k+d(d-1)N/2}G_{q}(N+2)\frac{G_{q}(M+N+2)}{G_{q}(M+1)}\frac{G_{q}(M-d+1)}{G_{q}(M-d+N+2)}\frac{G_{q}(d+1)}{G_{q}(d+N+2)}\times  \\
    & \sum_{r=\max{(j,k)}}^{N}\frac{\Gamma_{q}(M-d+j+1)\Gamma_{q}(d+k+1)\Gamma_{q}(r+1)}{\Gamma_{q}(j+1)\Gamma_{q}(k+1)\Gamma_{q}(M+r+1)}\qbinom{M-d+r-k-1}{r-k}\qbinom{d+r-j-1}{r-j},  
\intertext{for $j,k\leq N$ and $M>d$ (or $M\geq d$, if $j=0$). As expected, this expression coincides with \eqref{evskewFH} in the $q\rightarrow1$ limit. Also, as above, the formula recovers known expressions whenever $k=0$ (and thus we have a Schur polynomial, comparing again with the hook-content formula \cite{Stanley}, for instance). Finally, it follows from \eqref{mainsuminv} and the Cauchy identity that}
    &\lim_{N\rightarrow\infty}s_{(\underbrace{\scriptstyle N,...,N}_{d},j)/(k)}(1,q,\dots,q^{M-1})q^{-Nd(d-1)/2} = \\
    &\frac{q^{dj-(d-1)k}}{(1-q)^{d(M-d)}}\frac{G_{q}(d+1)G_{q}(M-d+1)}{G_{q}(M+1)}\sum_{r=0}^{\min{(j,k)}}q^{-r}\qbinom{M-d+j-r-1}{j-r}\qbinom{d+k-r-1}{k-r}.
\end{align*}
Note that inverting a Toeplitz matrix by means of the kernel \eqref{kernel} is a general procedure that can be used to obtain explicit evaluations of other specializations of the skew Schur polynomials of the shapes considered above, as long as the biorthogonal polynomials \eqref{biorthpol} are available. In particular, the results in subsection \ref{s.FH} for the pure Fisher-Hartwig singularity can be obtained in such a way. The biorthogonal polynomials can be obtained\footnote{In the hermitian case $\gamma=\delta$, where the polynomials are a single family of orthogonal polynomials, one recovers the family $S_{n}^{\,a}(z)$ introduced in \cite{Askey} after substituting $q$ by $q^{1/2}$, $z$ by $q^{-1/2}z$ and $a$ by $q^{\gamma}$.} as the $q\rightarrow1$ limit of the polynomials \eqref{pspecpols}, leading to the same formula \eqref{evskewFH}. 

~

Finally, taking into account that only one set of variables in the specialization of $f$ needs to be finite in equations \eqref{mininv}-\eqref{mainsuminv}, we can study the principal specialization of the above skew Schur polynomials with an infinite number of variables. To do so, we consider the specialization
\begin{equation*}
    f(z)=\Theta_{\delta}(z)=E(1,q^{-1},\dots,q^{-(\delta-1)};z^{-1})E(q^{\delta},q^{\delta+1},\dots;z)=\sum_{k=-\delta}^{\infty}\frac{q^{k\delta+k(k-1)/2}}{(q;q)_{\delta+k}}z^{k},
\end{equation*}
for some positive integer $\delta$. The Toeplitz determinant generated by this function is
\begin{equation*}
    D_{N}(\Theta_{\delta})=\frac{1}{(1-q)^{\delta N}}\frac{G_{q}(\delta+1)G_{q}(N+1)}{G_{q}(\delta+N+1)},
\end{equation*}
and the biorthogonal polynomials on the unit circle with respect to this function are given by
\begin{align*}
    p_{j}(z)&=\left(\frac{(q;q)_{\delta+j}}{(q;q)_{j}}\right)^{1/2}\sum_{r=0}^{j}(-1)^{j+r}\qbinom{j}{r}\frac{(q;q)_{\delta+j-r-1}}{(q;q)_{\delta-1}}q^{-(\delta-1)(j-r)}z^{r}, \\
    q_{j}(z)&=\left(\frac{1}{(q;q)_{j}(q;q)_{\delta+j}}\right)^{1/2}\sum_{r=0}^{j}(-1)^{j+r}\qbinom{j}{r}(q;q)_{\delta+r}q^{\delta(j-r)}z^{r}.
\end{align*}
Again, these expressions can be verified from their determinantal formulas. The kernel in this case is then
\begin{equation*}
    K_{N+1}(z,\omega)=\sum_{j,k=0}^{N}\left(\sum_{r=\max{j,k}}^{N}(-1)^{j+k}q^{r+(\delta-1) j-\delta k}\frac{(q;q)_{\delta+k}}{(q;q)_{j}}\qbinom{r}{r-k}\qbinom{\delta+r-j-1}{r-j} \right)z^{j}\omega^{-k}.
\end{equation*}
Inserting this in equation \eqref{ssinv} we arrive at
\begin{align*}
    &s_{(\underbrace{\scriptstyle N,...,N}_{d},j)/(k)}(1,q,\dots) = \\
    &\frac{q^{(d-1)j-dk+d(d-1)N/2}}{(1-q)^{d(N+1)}}\frac{G_{q}(N+2)G_{q}(d+1)}{G_{q}(d+N+2)}\frac{(q;q)_{d+k}}{(q;q)_{j}}\sum_{r=\max{(j,k)}}^{N}q^{r}\qbinom{r}{r-k}\qbinom{d+r-j-1}{r-j}.
\intertext{Once again, this identity coincides with the one given by the hook-content formula for $k=0$. It follows from \eqref{mainsuminv} and the Cauchy identity that}
    &\lim_{N\rightarrow\infty}s_{(\underbrace{\scriptstyle N,...,N}_{d},j)/(k)}(1,q,\dots)q^{-Nd(d-1)/2} = \\
    &q^{dj-(d-1)k}\frac{(1-q)^{d(d-1)/2}G_{q}(d+1)}{(q;q)_{\infty}^{d}}\sum_{r=0}^{\min{(j,k)}}q^{-r}\frac{1}{(q;q)_{j-r}}\qbinom{d+k-r-1}{k-r},
\end{align*}
where $(q;q)_{\infty}=\prod_{k=1}^{\infty}(1-q^{k})$ denotes the Euler function.


\begin{acknowledgements}
We thank Jorge Lobera for a MatLab implementation of formula \eqref{BD} and Alexandra Symeonides and T\^ania Zaragoza for useful discussions. We also thank an anonymous referee for several helpful remarks. The work of DGG was supported by the Fundação para a Ciência e a Tecnologia through the LisMath scholarship PD/BD/113627/2015. The work of MT was partially supported by the Fundação para a Ciência e a Tecnologia through its program Investigador FCT IF2014, under contract IF/01767/2014. The  work is also partially supported by FCT Project PTDC/MAT-PUR/30234/2017.
\end{acknowledgements}


\section*{Appendix: Direct computation of a minor generated by the pure FH singularity}

We now sketch a proof of identity \eqref{minorFH}. We follow the second of the two proofs given in \cite{BWFH} for the corresponding Toeplitz determinant. We include this computation to showcase how the Toeplitz minor structure can be exploited to obtain evaluations of the more complicated objects considered (i.e. multiple integrals, skew Schur polynomials), rather than for its mathematical insight.

The Fourier coefficients of $\varphi_{\gamma,\delta}$ are \cite{BottSilb}
\begin{equation*}
    d_{k}=\frac{\Gamma(\gamma+\delta+1)}{\Gamma(\gamma-k+1)\Gamma(\delta+k+1)}.
\end{equation*}
After taking out the factors
\begin{equation*}
   \prod_{j=1}^{N}\frac{\Gamma(\gamma+\delta+1)}{\Gamma(\gamma-\mu_{N}+N-j+1)},\qquad\prod_{k=1}^{N}\frac{1}{\Gamma(\delta+\mu_{k}+N-k+1)},
\end{equation*}
coming from the rows and columns of $D_{N}^{\varnothing,\mu}(\varphi_{\gamma,\delta})$ respectively, we obtain the determinant
\begin{equation} \label{detcomp} \begin{vmatrix} 
    \frac{\Gamma(\gamma-\mu_{N}+N)}{\Gamma(\gamma-\mu_{1}+1)} \frac{\Gamma(\delta+\mu_{1}+N)}{\Gamma(\delta+\mu_{1}+1)} && \frac{\Gamma(\gamma-\mu_{N}+N)}{\Gamma(\gamma-\mu_{2}+2)} \frac{\Gamma(\delta+\mu_{2}+N-1)}{\Gamma(\delta+\mu_{2})} && \dots &&  \frac{\Gamma(\delta+\mu_{N}+1)}{\Gamma(\delta+\mu_{N}-N+2)} \\
    \frac{\Gamma(\gamma-\mu_{N}+N-1)}{\Gamma(\gamma-\mu_{1})} \frac{\Gamma(\delta+\mu_{1}+N)}{\Gamma(\delta+\mu_{1}+2)} && \frac{\Gamma(\gamma-\mu_{N}+N-1)}{\Gamma(\gamma-\mu_{2}+1)} \frac{\Gamma(\delta+\mu_{2}+N-1)}{\Gamma(\delta+\mu_{2}+1)} && \dots && \frac{\Gamma(\delta+\mu_{N}+1)}{\Gamma(\delta+\mu_{N}-N+3)} \\
    \vdots && \vdots && && \vdots \\
   \frac{\Gamma(\gamma-\mu_{N}+1)}{\Gamma(\gamma-\mu_{1}-N+2)} && \frac{\Gamma(\gamma-\mu_{N}+1)}{\Gamma(\gamma-\mu_{1}-N+3)} && \dots && 1
\end{vmatrix}. \end{equation}
Subtracting $(\delta+\mu_{N}-N+1+j)$ times the $(j+1)$-th row from the $j$-th row, for $j=1,...,N-1,$ we can make the last column vanish except for the $1$ at the bottom, thus obtaining a determinant of order $N-1$. After extracting the factor
\begin{equation*}
    \prod_{k=1}^{N-1}(\gamma+\delta+1)(\mu_{k}-\mu_{N}+N-k)
\end{equation*}
from the columns of the matrix, and the factor
\begin{equation*}
    \prod_{j=1}^{N-1}\frac{\Gamma(\gamma-\mu_{N}+j)}{\Gamma(\gamma-\mu_{N-1}+j)}
\end{equation*}
from the rows, we obtain a determinant with the same structure as \eqref{detcomp}, but with the following changes: $N$ is replaced by $N-1$, $\delta$ is replaced by $\delta+1$ and $\mu$ is replaced by the partition $(\mu_{1},\dots,\mu_{N-1})$, that results from discarding the last part of $\mu$. Making use of this recursive structure and the well-known expression
\begin{equation} \label{schurat1}
    s_{\mu}(1^{N})=\frac{1}{G(N+1)}\prod_{1\leq j<k\leq N}(\mu_{j}-\mu_{k}+k-j) \qquad (N\geq l(\mu)),
\end{equation}
one arrives at the desired expression.


\end{document}